\newcommand{\red}{}
\DeclareFontFamily{OT1}{rsfs}{}
\DeclareFontShape{OT1}{rsfs}{m}{n}{ <-7> rsfs5 <7-10> rsfs7 <10->
rsfs10}{} \DeclareMathAlphabet{\mathscr}{OT1}{rsfs}{m}{n}
\newcommand{\eq}[1]{\eqref{#1}}
\newcommand{\bel}[1]{\begin{equation}\label{#1}}
\newcommand{\beal}[1]{\begin{eqnarray}\label{#1}}
\newcommand{\beadl}[1]{\begin{deqarr}\label{#1}}
\newcommand{\eeadl}[1]{\arrlabel{#1}\end{deqarr}}
\newcommand{\eeal}[1]{\label{#1}\end{eqnarray}}
\newcommand{\eead}[1]{\end{deqarr}}
\newcommand{\eea}{\end{eqnarray}}
\newcommand{\eeaa}{\end{eqnarray*}}
\newcommand{\be}{\begin{equation}}
\newcommand{\ee}{\end{equation}}
\DeclareFontFamily{OT1}{rsfs}{}
\DeclareFontShape{OT1}{rsfs}{m}{n}{ <-7> rsfs5 <7-10> rsfs7 <10->
rsfs10}{} \DeclareMathAlphabet{\mycal}{OT1}{rsfs}{m}{n}
\newcounter{mnotecount}[section]
\newcommand{\N}{{\Bbb N}}
\newcommand{\rmnote}[1]{}
\newcommand{\Ric}{\operatorname{Ric}}
\newcommand{\zH}{\mathring H}
\newcommand{\maclKzo}{{\mathcal K}^{\bot}}
\newcommand{\Hpsi}[1]{\zH^{#1}_{\phi,\psi}}
\def\mysavedown#1{\edef\mysubs{\mysubs#1}}
\def\mysaveup#1{\edef\mysups{\mysups#1}}
\def\mydown#1{{\mytensor}_{\vphantom{\mysubs}#1}}
\def\myup#1{{\mytensor}^{\vphantom{\mysups}#1}}
\def\tensor#1#2{
  #1
  \def\mytensor{\vphantom{#1}}
  \def\mysubs{\relax}
  \def\mysups{\relax}
  \let\down=\mysavedown
  \let\up=\mysaveup
  #2
  \let\down=\mydown
  \let\up=\myup
  #2
  }
\newcommand{\Hess}{\operatorname{Hess}}
\newcommand{\Tr}{\operatorname{Tr}}
\newcommand{\R}{\mathbb R}
\newcommand{\mbbS}{\mathbb S}
\renewcommand{\div}{\operatorname{div}}
\renewcommand{\epsilon}{\varepsilon}
\renewcommand{\hat}{\widehat}
\def\crn#1#2{{\vcenter{\vbox{
        \hbox{\kern#2pt \vrule width.#2pt height#1pt
           }
          \hrule height.#2pt}}}}
\newcommand{\U}{\mathbb U}
\renewcommand{\hbar}{{\overline h}}
\newcommand{\pre}[2]{{{\vphantom{#2}}^{#1}}\kern-.2ex{#2}}
\theoremstyle{plain}
\newtheorem{theorem}{\sc Theorem}[section]
\newtheorem{proposition}[theorem]{\sc Proposition}
\theoremstyle{definition}
\newtheorem{remark}[theorem]{\sc  Remark\rm}
\numberwithin{equation}{section}
\date{March 26, 2010}%
\begin{document}

\title[ Gluing  metrics in interpolating their scalar curvature] {Localized gluing of Riemannian metrics in
interpolating their scalar curvature}
\author[E.
Delay]{Erwann Delay} \address{Erwann Delay, Laboratoire d'analyse
non lin\'eaire et g\'eom\'etrie, Facult\'e des Sciences, 33 rue
Louis Pasteur, F84000 Avignon, France}
\email{Erwann.Delay@univ-avignon.fr}
\urladdr{http://www.math.univ-avignon.fr/Delay}
\begin{abstract}
We show that two smooth nearby Riemannian metrics can be glued
interpolating their scalar curvature. The resulting smooth
metric is the same as the starting ones outside the gluing
region and has scalar curvature interpolating between the
original ones. One can then glue metrics while maintaining
inequalities satisfied by the scalar curvature.
We also glue asymptotically Euclidean metrics to Schwarzschild
ones and the same for asymptotically Delaunay metrics, keeping
bounds on the scalar curvature, if any. This extend the Corvino
gluing near infinity to non-constant scalar curvature metrics.
\end{abstract}

\maketitle

\noindent
{\bf Keywords :}  scalar curvature, gluing, asymptotically Euclidean , asymptotically Delaunay.\\

\noindent
{\bf MSC 2010 :} 53C21, 35J60, 35J70

\tableofcontents
\section{Introduction}\label{section:intro}
The Corvino-Schoen method enables gluing near infinity constant
scalar curvature metrics (or more generally relativistic initial
data)  to a Schwarzschild (or Kerr) type model. This method was used
in many contexts and has a lot of very nice applications
\cite{Corvino}, \cite{CorvSchoen}, \cite{CD1} \cite{CD2},
\cite{ChrPol}, \cite{ChrPacPol} \cite{CD5}, \cite{ChrCorvIsen},...

It is now natural to see how far this approach can be extended. In
\cite{Delay:ColleP} the author shows that the method works for a
large class of underdetermined elliptic operators of any order. In
the present study, we will see that the gluing can be done if we
substitute the assumption
 that the scalar curvature interpolates between
the starting metric and a model, for the constant scalar curvature
assumption. We then keep the bound on the scalar curvature if there is
one.  We also recover  a constant scalar curvature
metric if we start with such a metric. \\

Let $(M,g)$ be a smooth Riemannian manifold. \red{We do not
assume that $(M,g)$ is connected nor complete nor compact}. Let
$\Omega_i$, $i=1,2,3$ be open subsets of $M$ with smooth
boundary and such that $\overline{\Omega}_1\subset
\Omega_2\subset\overline{\Omega}_2\subset \Omega_3$. We set
$\Omega=\Omega_2\backslash \overline{\Omega}_1$ and we assume
that $\overline\Omega$ is compact. Let $\overline g$ be another
smooth Riemannian metric on $\Omega_3\backslash
\overline{\Omega}_1$.

We will use a perturbation argument to glue $\overline g$ with $g$ on $\Omega$, so we are interested in $P_g$,
the linearized scalar curvature operator:
$$
P_gh:=DR(g)h=\div_g \div_g h+\Delta_g \Tr_g h -\langle \Ric(g),h\rangle_g,
$$
where our Laplacian $\Delta=\nabla^*\nabla$ is positive.
The surjectivity of $P_g$ is, at least formally, related to the injectivity of
 its $L^2$ formal adjoint :
$$
P^*_gu=\Hess_g u+\Delta_g u\;g -u\Ric(g).
$$
We will say that the metric $g$ is {\it non degenerate} on
$\Omega$ if the kernel $\mathcal K$ of $P^*_g$ is trivial on
this set. This condition is generic \cite{BCS}.

We now state the local deformation:
\begin{theorem}\label{maintheorem}
 Let $\chi$ be a smooth cutoff function equal to $1$ near $\overline\Omega_1$ and to $0$ near the complementary of
 $\Omega_2$.
 If $g$ is non degenerate and $\overline g$ is close to $g$ on $\overline \Omega$ then there
 exists
 a symmetric covariant two tensor
  $h\in C^\infty({ \Omega_3})$, supported in $\overline\Omega$ such that the metric
 $$
 \widetilde g:=\chi g+(1-\chi)\overline g+h,
 $$
 solves
 $$R(\widetilde g)=\chi R(g)+(1-\chi)R(\overline g).$$
\end{theorem}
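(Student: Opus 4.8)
The plan is to run the Corvino--Schoen gluing scheme in the weighted-space formulation the author developed for underdetermined operators in \cite{Delay:ColleP}. Set $g_0:=\chi g+(1-\chi)\overline g$ for the naive interpolation and $\overline R:=\chi R(g)+(1-\chi)R(\overline g)$ for the target curvature. Since $\chi\equiv1$ near $\overline\Omega_1$ and $\chi\equiv0$ near $M\setminus\Omega_2$, on a neighbourhood of $\overline\Omega_1$ one has $g_0=g$ and $\overline R=R(g)$, while on a neighbourhood of $\Omega_3\setminus\Omega_2$ one has $g_0=\overline g$ and $\overline R=R(\overline g)$. Hence it suffices to produce a smooth symmetric $2$-tensor $h$ on $\Omega$, vanishing to infinite order along $\partial\Omega$ (so that its extension by $0$ belongs to $C^\infty(\Omega_3)$ and is supported in $\overline\Omega$), solving $R(g_0+h)=\overline R$ on $\Omega$. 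Writing $Q_{g_0}(h):=R(g_0+h)-R(g_0)-P_{g_0}h$ for the (at least quadratic) Taylor remainder and $f:=\overline R-R(g_0)$, this is the equation
\[
P_{g_0}h=f-Q_{g_0}(h).
\]
By the remark above, $f$ vanishes near $\partial\Omega$, hence is supported in $\overline\Omega$; moreover $g_0-g=(1-\chi)(\overline g-g)$, so $g_0$ is $C^2(\overline\Omega)$-close to $g$ when $\overline g$ is, and therefore $R(g_0)$ and $\overline R$ are both $C^0(\overline\Omega)$-close to $R(g)$, making $\|f\|$ small.

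For the linear theory, recall that $P_{g_0}$ is underdetermined elliptic with formal $L^2$-adjoint $P^*_{g_0}$, and that the non-degeneracy hypothesis reads $\Ker P^*_g=\{0\}$ on $\Omega$. Following \cite{Corvino,CorvSchoen,Delay:ColleP}, work in the weighted Sobolev spaces $\zH^k_{\phi,\psi}(\Omega)$ in which $\phi$ and $\psi$ are built from the distance to $\partial\Omega$ in such a way that each element vanishes to infinite order there; then seek $h$ of the form $h=\Psi\,P^*_{g_0}u$ for a fixed positive weight factor $\Psi$ (chosen, as in \cite{Delay:ColleP}, so that the composition below maps the natural weighted spaces into one another and is formally self-adjoint). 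The equation becomes
\[
\mcL_{g_0}u:=P_{g_0}\bigl(\Psi\,P^*_{g_0}u\bigr)=f-Q_{g_0}(\Psi\,P^*_{g_0}u),
\]
with $\mcL_{g_0}$ a fourth-order, formally self-adjoint, elliptic operator on the compact manifold-with-boundary $\overline\Omega$. The analytic core is the coercivity estimate
\[
\|u\|_{\zH^{2}_{\phi,\psi}(\Omega)}\le C\,\|P^*_{g_0}u\|_{L^2_\psi(\Omega)},
\]
which holds for $g$ because $\mathcal K=\{0\}$ (combined with weighted interior and boundary elliptic estimates) and which persists, with a comparable constant, for $g_0$ once $\overline g$ is close enough to $g$ in a sufficiently strong norm on $\overline\Omega$, the point being that the estimate is stable under small perturbations of the coefficients. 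Granting it, the symmetric bilinear form $(u,v)\mapsto\int_\Omega\langle P^*_{g_0}u,P^*_{g_0}v\rangle\psi\,dg$ (suitably weighted) is bounded and coercive on the corresponding weighted Hilbert space, so Lax--Milgram furnishes a bounded inverse $\mcL_{g_0}^{-1}$ from the relevant space of sources to $\zH^{2m}_{\phi,\psi}(\Omega)$, $m$ large, with operator norm uniform in $g_0$ near $g$; weighted elliptic regularity then promotes the weak solution to a smooth one that still vanishes to infinite order along $\partial\Omega$.

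Finally, close the nonlinear problem by a contraction argument in a small ball of $\zH^{2m}_{\phi,\psi}(\Omega)$, with $m$ chosen so that this space embeds in $C^2$ and is stable under the products and quotients occurring in $Q_{g_0}$. Since $Q_{g_0}$ is quadratically small, one has, for $h_1,h_2$ small, $\|Q_{g_0}(h_1)-Q_{g_0}(h_2)\|\lesssim\bigl(\|h_1\|+\|h_2\|\bigr)\|h_1-h_2\|$ in these norms; combining this with $\|f\|$ small and $\|\mcL_{g_0}^{-1}\|$ bounded shows that the map $u\mapsto\mcL_{g_0}^{-1}\bigl(f-Q_{g_0}(\Psi\,P^*_{g_0}u)\bigr)$ sends a suitable small ball into itself and is a contraction there. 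Its fixed point $u$ is smooth, so $h:=\Psi\,P^*_{g_0}u\in C^\infty(\Omega)$ vanishes to infinite order along $\partial\Omega$; extended by $0$ it is the desired tensor (and is $C^0$-small, so $g_0+h$ is indeed a metric), and $\widetilde g=\chi g+(1-\chi)\overline g+h$ then satisfies $R(\widetilde g)=\overline R=\chi R(g)+(1-\chi)R(\overline g)$. I expect the main obstacle to be precisely the coercivity estimate for $P^*_{g_0}$ in these degenerate weighted spaces: one must turn the qualitative vanishing of $\mathcal K$ into a quantitative a priori bound whose constant neither blows up as the weights degenerate at $\partial\Omega$ nor deteriorates as $g_0$ moves away from $g$. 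Everything else — the reduction, the Lax--Milgram step, the elliptic regularity, and the fixed point — is routine once this is secured.
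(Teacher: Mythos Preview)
Your proposal is correct and follows essentially the same route as the paper: both set $g_\chi=\chi g+(1-\chi)\overline g$, $R_\chi=\chi R(g)+(1-\chi)R(\overline g)$, observe that $R_\chi-R(g_\chi)$ is small and compactly supported in $\Omega$, seek $h=\psi^2\phi^4 P^*_{g_\chi}u$ in exponentially weighted spaces, use the non-degeneracy hypothesis $\mathcal K=\{0\}$ to get a uniform isomorphism for the fourth-order operator $\mathcal L_{\phi,\psi}(g_\chi)=\psi^{-2}P_{g_\chi}\psi^2\phi^4 P^*_{g_\chi}$, and close by a fixed-point/inverse-function-theorem argument followed by weighted elliptic regularity. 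The only cosmetic difference is that the paper packages the linear step as an isomorphism statement (citing \cite{CD2}) and invokes a uniform inverse function theorem, whereas you spell out the coercivity estimate, Lax--Milgram, and contraction mapping explicitly.
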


This first result is very closely related to Theorem 1 in
\cite{Corvino}, and the proof is almost the same: it is given in
Section \ref{sec:fixe}.\\

We are now interested in asymptotically Euclidean  (AE for
short) metrics (see Section \ref{sec:AE} for the  precise
definition). We will use the procedure to glue these metrics
with a Schwarzschild model (slice), defined on
 $\R^n\backslash \{c\}$ by:
$$
g_S=g_{m,c}=\left(1+\frac m{(n-1)|x-c|^{n-2}}\right)^{\frac 4{n-2}}\delta,
$$
where $\delta$ is the Euclidean  metric. The gluing will occur
on an annulus
$$
A_{\lambda,4\lambda}=\{x\in\R^n, \lambda<r=|x|<4\lambda\}.
$$
The proximity of the metrics will be guaranteed for $\lambda$ large.

\begin{theorem}\label{ThAE}
Let $g$ be an asymptotically Euclidean   metric of order
$\alpha>n/2-1$.  Assume the mass $m_g$ of $g$ is not zero,
assume also $R(g)=O(r^{-\beta})$ with $\beta>n$, and finally
assume that $g$ satisfies the asymptotic parity condition. Then
there exists $\lambda_0>0$ such that for all
$\lambda>\lambda_0$ the metric $g$ can be glued with a
Schwarzschild metric on the annulus $A_{\lambda,4\lambda}$. The
resulting metric $g_\lambda$ has scalar curvature interpolating
between $R(g)$ and $0$ on $A_{\lambda,4\lambda}$. In
particular, if $g$ has  non negative scalar curvature then so
has $g_\lambda$. 
\end{theorem}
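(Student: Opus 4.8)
The plan is to reduce Theorem~\ref{ThAE} to a perturbation problem on a \emph{fixed} annulus by a rescaling, and then to cope with the degeneracy of the flat model by varying the $(n+1)$ parameters $(m,c)$ of the Schwarzschild family. First I would introduce the dilation $\mu_\lambda(x)=\lambda x$ and set $g^{(\lambda)}:=\lambda^{-2}\mu_\lambda^{*}g$ and $g_{m,c}^{(\lambda)}:=\lambda^{-2}\mu_\lambda^{*}g_{m,c}$, which are Riemannian metrics on the fixed annulus $A_{1,4}$. The asymptotically Euclidean expansion of order $\alpha>n/2-1$ gives $g^{(\lambda)}\to\delta$ in every $C^{k}(\overline{A_{1,4}})$ as $\lambda\to\infty$, while the explicit form of $g_S$ gives $g_{m,c}^{(\lambda)}\to\delta$ as well, uniformly for $(m,c)$ in compact sets. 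Moreover the rescaled scalar curvature is $R(g^{(\lambda)})=\lambda^{2}\mu_\lambda^{*}R(g)=O(\lambda^{2-\beta})$ on $A_{1,4}$, which tends to $0$ since $\beta>n>2$ (the same hypothesis $\beta>n$, together with $\alpha>n/2-1$, being what makes the ADM mass $m_g$ well defined), and $R(g_{m,c}^{(\lambda)})\equiv 0$ because Schwarzschild slices are scalar-flat. So on $A_{1,4}$ we are deforming a small perturbation of $\delta$ while prescribing a small scalar curvature, with a free choice of $(m,c)$.

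The difficulty is that Theorem~\ref{maintheorem} cannot be applied to the limiting configuration, because the flat metric is \emph{degenerate}: the kernel of $P^{*}_{\delta}$ on an annulus is the $(n+1)$-dimensional space $\{\,a+b\centerdot x : a\in\R,\ b\in\R^{n}\,\}$ of restrictions of affine functions, and the inverse of $P_gP^{*}_g$ in the exponentially weighted spaces degenerates as $g^{(\lambda)}\to\delta$. I would therefore follow the near-infinity strategy of \cite{Corvino}, \cite{CorvSchoen} in the quantitative higher-order form of \cite{Delay:ColleP}. Fix a cutoff $\chi$ adapted to $A_{1,4}$, let $\mathcal K_{\lambda}$ be the $(n+1)$-dimensional ``approximate kernel'' spanned by the eigenfunctions of $P_{g^{(\lambda)}}P^{*}_{g^{(\lambda)}}$ (in the weighted space with weights vanishing to infinite order at $\partial A_{1,4}$) whose eigenvalues tend to $0$; these span a space $C^{0}$-close to the affine functions. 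Write $h=P^{*}_{g^{(\lambda)}}\phi$ and split the equation $R\bigl(\chi g^{(\lambda)}+(1-\chi)g_{m,c}^{(\lambda)}+h\bigr)=\chi R(g^{(\lambda)})$ into: \emph{(i)} a projected problem, solved for $\phi$ (hence for $h$, automatically supported in $\overline\Omega$) by the isomorphism property of $P_{g^{(\lambda)}}P^{*}_{g^{(\lambda)}}$ composed with the $L^{2}$-projection off $\mathcal K_{\lambda}$, with bounds uniform in $\lambda$ once one quotients by $\mathcal K_{\lambda}$ and then a fixed-point argument for the nonlinear remainder; and \emph{(ii)} a finite-dimensional problem: choose $(m,c)$, with $m$ close to $m_g$, so that the $\mathcal K_{\lambda}$-components of the residual vanish.

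The crux is step \emph{(ii)}, and it is the step I expect to require the most care. Differentiating the residual in $(m,c)$ at the Schwarzschild background and pairing against the basis $\{1,x^{1},\dots,x^{n}\}$ of the limit kernel, one integrates by parts: $P^{*}$ annihilates these functions up to the curvature of $g^{(\lambda)}$, which is $o(1)$, so one is left with boundary integrals over $\{r=1\}$ and $\{r=4\}$ that are, up to normalization and an $o(1)$ error as $\lambda\to\infty$, the ADM mass integral and the ADM center-of-mass integrals of $g-g_{m,c}$. The hypothesis $m_g\neq 0$ makes the $1\times1$ mass block of the resulting $(n+1)\times(n+1)$ pairing matrix invertible, and the asymptotic parity condition forces the odd contributions to cancel, so that the $n\times n$ center block is invertible as well; hence the matrix is invertible for all large $\lambda$. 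The main obstacle, beyond bookkeeping, is exactly this identification of the finite-dimensional pairing with the mass/center-of-mass integrals together with the uniform-in-$\lambda$ control of step \emph{(i)}; the invertibility coming from $m_g\neq0$ and parity is then the conceptual heart.

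Finally I would apply the inverse function theorem to the combined map $(\phi,m,c)\mapsto R\bigl(\chi g^{(\lambda)}+(1-\chi)g_{m,c}^{(\lambda)}+P^{*}_{g^{(\lambda)}}\phi\bigr)-\chi R(g^{(\lambda)})$, obtaining for each $\lambda>\lambda_{0}$ a solution with $h_\lambda$ smooth, supported in $\overline\Omega$ and small, $m_\lambda\to m_g$, and $c_\lambda$ bounded. Undoing the rescaling produces the metric $g_\lambda$ on the annulus $A_{\lambda,4\lambda}$, equal to $g$ outside and to a Schwarzschild slice $g_{m_\lambda,c_\lambda}$ inside, and the identity $R(g_\lambda)=\chi R(g)+(1-\chi)\cdot 0$ (transported back) shows that $R(g_\lambda)$ interpolates between $R(g)$ and $0$ on $A_{\lambda,4\lambda}$; in particular $R(g)\ge 0$ forces $R(g_\lambda)=\chi R(g)\ge 0$.
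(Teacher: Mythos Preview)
Your outline follows the same rescale-to-a-fixed-annulus and Lyapunov--Schmidt strategy as the paper, and would work, but it differs from the paper's proof in two technical choices and contains one misattribution that is worth correcting.

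\textbf{Technical differences.} The paper does \emph{not} introduce a $\lambda$-dependent approximate kernel $\mathcal K_\lambda$ of $P_{g^{(\lambda)}}P^{*}_{g^{(\lambda)}}$; it simply fixes once and for all $\mathcal K=\operatorname{span}\{1,x^1,\dots,x^n\}=\ker P^{*}_\delta$ and uses the uniform isomorphism property of $\pi_{\mathcal K^\bot}\mathcal L_{\phi,\psi}(\tilde g)$ for $\tilde g$ close to $\delta$ (Proposition~\ref{iso}). Your approximate-kernel route is heavier and gains nothing here. Second, the paper does not differentiate in $(m,c)$ and apply an inverse-function theorem. It computes the obstruction map directly,
\[
q_\lambda(m,c)=4\omega_{n-1}\lambda^{2-n}\bigl(m-m_g,\;C_{g_{m,c}}-C_g\bigr)+o(\lambda^{2-n}),
\]
with the $o(1)$ uniform on a compact parameter set (this identification is the content of the Appendix), and then applies a Brouwer/degree argument (Lemma~3.18 of \cite{CD2}). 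This is slightly more robust than your IFT because no $C^{1}$ dependence of the implicitly-constructed $h$ on $(m,c)$ is needed.

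\textbf{Misattributed roles of the hypotheses.} The $1\times1$ mass block is \emph{always} invertible, since $\partial_m(m-m_g)=1$; the assumption $m_g\neq0$ is not used there. Its role is in the \emph{center} block: for Schwarzschild one has $C_{g_{m,c}}=mc$, so $\partial_c C_{g_{m,c}}=m\,\mathrm{Id}$, and one needs $m$ (chosen close to $m_g$) to be nonzero. The asymptotic parity condition does not produce invertibility of anything; its sole purpose is to make the limit $C_g=\lim_{\lambda\to\infty}C_g^{\,l}(\lambda)$ exist, so that the $o(1)$ term in $q^{\,l}_\lambda$ is genuinely small and the Brouwer argument closes.
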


The asymptotic
parity condition  is defined in Section
\ref{sec:AE}, it  ensures  the center of mass to be well defined on an AE chart.\\

Finally, we show in Section \ref{sec:AD} that the method can be
adapted to asymptotically
Delaunay metrics giving Theorem \ref{thAD}.\\

There also exists such a constant scalar curvature gluing in an
asymptotically hyperbolic setting \cite{CD5}. The AH metrics
are then glued to a Schwarzschild AdS (a particular Kottler)
metric on an annulus near the infinity. This result can
certainly be adapted to non constant scalar curvature metrics
as before. However the result there is probably  not sharp, so
 it seems
sensible  to wait for a sharper (and if possible simpler)
version before extending it to the non constant scalar
curvature case.\\

The approach
developed here can certainly
be adapted to the full constraint map \cite{CorvSchoen}
\cite{CD2} and to other operators \cite{Delay:ColleP}, but the
utility is less clear for the moment.

\section{Gluing on a fixed set}\label{sec:fixe}

In this section we give the proof of Theorem \ref{maintheorem}. This
proof is almost the same as in \cite{Corvino} or \cite{CD2} for
instance. We just recall the different steps for
 completeness.

\subsection{Weighted
spaces}\label{SwSs}

We will use the spaces already introduced in \red{the} appendix
of \cite{CD2} in the special case of \red{a} compact boundary.
We keep the general notation of \cite{CD2} for easy comparison
with that paper.

Let $x\in C^{\infty}(\overline{\Omega})$ be a (non negative)
defining function of the boundary
$\partial\Omega=x^{-1}(\{0\})$.

Let $a\in\N$, $s\in\R$, $s>0$ and let us define
$$\phi=x^2\;, \quad
\psi=x^{2(a-n/2)}e^{-s/x}\mbox{ and }  \varphi=x^{2a}e^{-s/x}.$$

For $k\in \N$ let $H^k_{\phi,\psi}$ be the space of $H^k_{{loc}}$
functions or tensor fields such that the norm
 \be \label{defHn}
 \|u\|_{H^k_{\phi,\psi}}:=
(\int_M(\sum_{i=0}^k \phi^{2i}|\nabla^{(i)}
u|^2_g)\psi^2d\mu_g)^{\frac{1}{2}} \ee is finite, where
$\nabla^{(i)}$ stands for the tensor $\underbrace{\nabla ...\nabla
}_{i \mbox{ \scriptsize times}}u$, with $\nabla$ --- the
Levi-Civita covariant derivative of $g$;  For
$k\in \N$ we denote by $\zH^k_{\phi,\psi}$ the closure in $H^k_{\phi,\psi}$ of the
space of $H^k$ functions or tensors which are compactly (up to a
negligible set) supported in $\Omega$, with the norm induced from
$H^k_{\phi,\psi}$.
The $\zH^k_{\phi,\psi}$'s are Hilbert spaces with the obvious scalar product
associated to the norm \eq{defHn}. We will also use the following
notation
$$
\quad \zH^k  :=\zH^k  _{1,1}\;,\quad
L^2_{\psi}:=\zH^0_{1,\psi}=H^0_{1,\psi}\;,
$$ so that $L^2\equiv \zH^0:=\zH^0_{1,1}$. We  set
$$
W^{k,\infty}_{\phi}:=\{u\in W^{k,\infty}_{{{loc}}} \mbox{ such that }
\phi^i|\nabla^{(i)}u|_g\in L^{\infty}\}\;,
$$
with the obvious norm, and with $\nabla^{(i)}u$ --- the
distributional covariant derivatives of $u$.

For  $k\in\N$ and $\alpha\in [0,1]$, we define
$C^{k,\alpha}_{\phi,\varphi}$ the space of $C^{k,\alpha}$
functions or tensor fields  for which the norm
$$
\begin{array}{l}
\|u\|_{C^{k,\alpha}_{\phi,\varphi}}=\sup_{x\in
M}\sum_{i=0}^k\Big(
\|\varphi \phi^i \nabla^{(i)}u(x)\|_g\\
 \hspace{3cm}+\sup_{0\ne d_g(x,y)\le \phi(x)/2}\varphi(x) \phi^{i+\alpha}(x)\frac{\|
\nabla^{(i)}u(x)-\nabla^{(i)}u(y)\|_g}{d^\alpha_g(x,y)}\Big)
\end{array}$$ is finite.

\begin{remark}
In the context of compact boundary, it is more usual to use $\phi=x$ and for $\psi$ and $\varphi$ a power of $x$
which can be done here also as long as we work with finite differentiability. We choose to take the exponential weight
to treat all the case\red{s} in the same way.
\end{remark}

\subsection{The gluing}
We give the proof of Theorem \ref{maintheorem} by three
propositions, giving the different steps needed. Let $g$ be a
smooth fixed Riemannian metric on $\overline{\Omega}$. We
denote by $\mathcal K$ the kernel of $P^*_g$. For any  metric
$\widetilde g$ on $\overline{\Omega}$, we set
$$ {\mathcal L}_{\phi,\psi}(\widetilde g):=
\psi^{-2} P_{\widetilde g}\psi^2\phi^{4} P_{\widetilde g}^*
\;.$$

 We denote by $\maclKzo $ the $L^2_\psi(\widetilde g)$ orthogonal to
 (the fixed set) $\mathcal K$, and   $\pi_{\maclKzo }$ the $L^2_\psi(\widetilde g)$ projection
onto  $\maclKzo $. We   have (see \cite{CD2} for instance)

\begin{proposition}\label{iso}For $k\ge 0$,
 the map \bel{iso} \pi_{\maclKzo } {\mathcal L}_{\phi,\psi}(\widetilde{g}): {\maclKzo }\cap  \Hpsi{k+4}(\widetilde{g})
\longrightarrow {\maclKzo }\cap \Hpsi{k}(\widetilde{g}) \ee is an
isomorphism such that the norm of its inverse is bounded
independently of $\widetilde{g}$ close to $g$ in
$W^{k+4,\infty}_\phi$.
\end{proposition}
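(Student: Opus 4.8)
The plan is to run the standard Corvino--Schoen variational scheme, as in \cite{Corvino} and the appendix of \cite{CD2}; I describe it for $\widetilde g=g$, the perturbed case being the same once one checks that every constant depends continuously on $\|\widetilde g-g\|_{W^{k+4,\infty}_\phi}$. First I would integrate by parts: for $u,v$ smooth and compactly supported (up to a negligible set) in $\Omega$,
\[
\int_M\big(\psi^{-2}P_g(\psi^2\phi^4P^*_gu)\big)\,v\,\psi^2\,d\mu_g
=\int_M\psi^2\phi^4\,\langle P^*_gu,\,P^*_gv\rangle_g\,d\mu_g=:B(u,v),
\]
the boundary contributions vanishing because the factor $e^{-s/x}$ in $\psi$ suppresses everything at $\partial\Omega$. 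Hence, for $f\in\maclKzo\cap\Lpsi$, solving $\pi_{\maclKzo}{\mathcal L}_{\phi,\psi}(g)u=f$ amounts to finding $u$ in the Hilbert space $\mathcal H:=\maclKzo\cap\Hpsi 2$ with $B(u,v)=\langle f,v\rangle_{\Lpsi}$ for all $v\in\mathcal H$. The form $B$ is symmetric, non-negative and bounded on $\mathcal H$, and $v\mapsto\langle f,v\rangle_{\Lpsi}$ is a bounded linear functional there.

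The core of the argument is the coercivity estimate
\[
\|u\|_{\Hpsi 2}^2\ \le\ C\int_M\psi^2\phi^4\,|P^*_gu|^2_g\,d\mu_g\ =\ C\,B(u,u),\qquad u\in\mathcal H .
\]
Because $P^*_g$ is overdetermined (injectively) elliptic, a G{\aa}rding-type inequality bounds the top-order weighted norm of $u$ by $B(u,u)$ plus lower-order weighted terms; those lower-order terms are then absorbed using the non-degeneracy hypothesis $\mathcal K=\{0\}$ --- for instance by contradiction, a sequence $u_j\in\mathcal H$ with $\|u_j\|_{\Hpsi 2}=1$ and $B(u_j,u_j)\to0$ would, after a weighted Rellich extraction, converge to an element of $\maclKzo$ annihilated by $P^*_g$, hence to $0$, contradicting $\|u_j\|_{\Hpsi 2}=1$. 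The one genuinely delicate point is the degeneracy of $\phi$ and $\psi$ at $\partial\Omega$, handled by the device of \cite{Corvino}: $\nabla(e^{-s/x})=(s/x^2)\,e^{-s/x}\,\nabla x$ furnishes the large parameter $s$, so that for $s$ fixed sufficiently large every commutator and weight-derivative term produced by the integration by parts either carries a favorable sign or is absorbable. I expect this weighted coercivity up to the degenerate boundary to be the main obstacle, just as it is in \cite{Corvino} and \cite{CD2}.

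Granting the estimate, the Lax--Milgram lemma yields a unique $u\in\mathcal H$ with $B(u,\cdot)=\langle f,\cdot\rangle_{\Lpsi}$ and $\|u\|_{\Hpsi 2}\le C\|f\|_{\Lpsi}$; then ${\mathcal L}_{\phi,\psi}(g)u-f\in\mathcal K$, which is smooth and finite-dimensional, so $u$ solves a fourth-order weight-degenerate elliptic equation with smooth data. Rescaled interior elliptic estimates on balls of radius comparable to $\phi(x)$, summed against the weight $\psi^2$ (the weighted $L^2$/Schauder estimates recalled in \cite{CD2}), then bootstrap $u\in\Hpsi{k+4}$ together with $\|u\|_{\Hpsi{k+4}}\le C\|f\|_{\Hpsi k}$ whenever $f\in\maclKzo\cap\Hpsi k$. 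Injectivity of $\pi_{\maclKzo}{\mathcal L}_{\phi,\psi}(g)$ is immediate from the coercivity estimate ($B(u,u)=0\Rightarrow P^*_gu=0\Rightarrow u\in\mathcal K\cap\maclKzo=\{0\}$) and surjectivity from the solvability just obtained, so the map is the asserted isomorphism.

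For the uniformity I would track the dependence of the constants: they come from the symbol bounds, the coefficient norms of $P_{\widetilde g}$ and $P^*_{\widetilde g}$, the G{\aa}rding/Poincar\'e constant, and the weighted regularity constants, all of which vary continuously with $\widetilde g$ measured in $W^{k+4,\infty}_\phi$ --- the $\phi$-weighting being exactly what keeps the degenerate coefficients near $\partial\Omega$ under control. Consequently there is a $W^{k+4,\infty}_\phi$-neighbourhood of $g$ on which the coercivity and regularity estimates hold with a single constant, so the inverse of $\pi_{\maclKzo}{\mathcal L}_{\phi,\psi}(\widetilde g)$ is bounded uniformly there; on that neighbourhood $\ker P^*_{\widetilde g}$ is again trivial by the same stable estimate, so the construction is well posed for every such $\widetilde g$.
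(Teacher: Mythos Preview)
Your outline is essentially the standard Corvino--Schoen/\cite{CD2} argument that the paper invokes (the paper gives no proof of its own, only the citation), so the strategy is correct. One slip worth flagging: you write that the lower-order terms in the coercivity estimate are ``absorbed using the non-degeneracy hypothesis $\mathcal K=\{0\}$'', but Proposition~\ref{iso} does \emph{not} assume $\mathcal K$ trivial --- $\mathcal K$ is the fixed kernel of $P^*_g$ and the map is only asserted to be an isomorphism on $\maclKzo$. Your own contradiction argument actually handles this correctly (the limit lies in $\mathcal K\cap\maclKzo=\{0\}$), so just drop the phrase ``$\mathcal K=\{0\}$''. Likewise, your closing remark that ``$\ker P^*_{\widetilde g}$ is again trivial'' is neither assumed nor needed: the operator for $\widetilde g$ is still sandwiched between the projection onto the \emph{fixed} $\maclKzo$ (orthogonal taken in $L^2_\psi(\widetilde g)$), and uniformity follows from continuity of the coercivity and regularity constants without any statement about $\ker P^*_{\widetilde g}$.
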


We can now state

\begin{proposition}\label{mainprop} Let $k>n/2$.
 Let $\chi$ be a smooth cutoff function equal to $1$ near $\overline\Omega_1$ and to $0$ near the complementary of
 $\Omega_2$. Let us define
 $$g_\chi:=\chi g+(1-\chi)\overline g\mbox{ and }R_\chi:=\chi R(g)+(1-\chi)R(\overline g).$$
 If $\overline g$ is close to $g$ in $C^{k+4}(\overline \Omega)$ then there exists a unique
 $h=\psi^2\phi^{4} P_{g_\chi}^*u$, with $u\in\Hpsi{k+4}(g_\chi)$ such that
 \bel{solmodker} \pi_{\maclKzo }\psi^{-2}[R(g_\chi+h)-R_\chi]=0.\ee
\end{proposition}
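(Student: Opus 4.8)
The plan is to set up a fixed-point argument for the map $u\mapsto h=\psi^2\phi^4 P_{g_\chi}^* u$, exactly in the spirit of Corvino's original deformation argument. First I would Taylor-expand the scalar curvature: write
$$R(g_\chi+h)=R(g_\chi)+P_{g_\chi}h+Q_{g_\chi}(h),$$
where $Q_{g_\chi}(h)$ collects the terms at least quadratic in $h$ and its derivatives, and satisfies the usual estimate $\|Q_{g_\chi}(h)-Q_{g_\chi}(h')\|\lesssim (\|h\|+\|h'\|)\|h-h'\|$ in the appropriate weighted norms, provided $\|h\|,\|h'\|$ are small. Substituting $h=\psi^2\phi^4 P_{g_\chi}^* u$ and applying $\pi_{\maclKzo}\psi^{-2}$, equation \eqref{solmodker} becomes
$$\pi_{\maclKzo}{\mathcal L}_{\phi,\psi}(g_\chi)u=\pi_{\maclKzo}\psi^{-2}\big[R_\chi-R(g_\chi)-Q_{g_\chi}(\psi^2\phi^4 P_{g_\chi}^* u)\big].$$

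Next I would invert the linear operator using Proposition \ref{iso}: since $\pi_{\maclKzo}{\mathcal L}_{\phi,\psi}(g_\chi)$ is an isomorphism from $\maclKzo\cap\Hpsi{k+4}(g_\chi)$ onto $\maclKzo\cap\Hpsi{k}(g_\chi)$ with inverse bounded uniformly for $g_\chi$ near $g$ in $W^{k+4,\infty}_\phi$ (which holds once $\overline g$ is $C^{k+4}$-close to $g$, hence $g_\chi$ is close to $g$ in that norm), the equation is equivalent to the fixed-point problem
$$u=F(u):=\big(\pi_{\maclKzo}{\mathcal L}_{\phi,\psi}(g_\chi)\big)^{-1}\pi_{\maclKzo}\psi^{-2}\big[R_\chi-R(g_\chi)-Q_{g_\chi}(\psi^2\phi^4 P_{g_\chi}^* u)\big].$$
I would then check that $F$ maps a small closed ball $B_\rho\subset\maclKzo\cap\Hpsi{k+4}(g_\chi)$ into itself and is a contraction there. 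The key inputs are: (i) the source term $R_\chi-R(g_\chi)$ is small in $\Hpsi{k}$ — this is where the interpolation hypothesis matters, since $R_\chi-R(g_\chi)=(1-\chi)(R(\overline g)-R(g))$ is supported in the gluing region and is controlled by the $C^{k}$-closeness of $\overline g$ to $g$; (ii) the quadratic remainder is estimated using the mapping properties of $h\mapsto\psi^2\phi^4 P_{g_\chi}^*$ together with the weighted Moser/multiplication inequalities valid for $k>n/2$, so that $\|Q_{g_\chi}(\psi^2\phi^4 P^*_{g_\chi}u)\|_{\Hpsi k}\lesssim\|u\|_{\Hpsi{k+4}}^2$ and the difference bound is linear-times-small. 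Choosing $\rho$ proportional to the size of the source term and shrinking the closeness of $\overline g$ to $g$ then closes the argument by the Banach fixed point theorem, yielding a unique $u\in B_\rho$, and by the uniqueness statement one checks no solution exists outside the small ball (by absorbing the quadratic term).

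The main obstacle — and the place where care is genuinely needed rather than routine — is controlling the nonlinear term $Q_{g_\chi}$ in the exponentially weighted spaces $\Hpsi{k}$. One must verify that the composition $u\mapsto\psi^2\phi^4 P_{g_\chi}^* u\mapsto Q_{g_\chi}(\cdot)\mapsto\psi^{-2}(\cdot)$ actually lands back in $\Hpsi{k}$ with the claimed quadratic estimate; the dangerous point is that $Q$ involves products of $h$ and $\nabla h$ with coefficients depending on $g_\chi$, and the weights $\phi^4,\psi^2,\psi^{-2}$ must combine favorably. This is handled by the weighted product inequalities established in the appendix of \cite{CD2} (the reason $\phi=x^2$ and the specific powers in $\psi,\varphi$ were chosen), so in practice one cites those estimates; but it is the substantive step, the rest being the standard Corvino iteration. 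The Sobolev condition $k>n/2$ is exactly what makes $\Hpsi{k}$ an algebra (with the weights) and lets the Moser estimates go through.
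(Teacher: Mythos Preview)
Your approach is essentially the same as the paper's, which simply cites \cite{CD2} Theorem~5.9 for a uniform inverse function theorem (equivalently, contraction mapping) argument built on Proposition~\ref{iso}, noting that the source $R_\chi-R(g_\chi)$ vanishes near $\partial\Omega$ and goes to zero with $\overline g-g$. One minor correction: your identity $R_\chi-R(g_\chi)=(1-\chi)(R(\overline g)-R(g))$ is false since $R$ is nonlinear in the metric; what is true, and what you actually need, is that $R_\chi-R(g_\chi)$ vanishes identically where $\chi\equiv0$ or $\chi\equiv1$ (hence near $\partial\Omega$) and is controlled by $\|\overline g-g\|_{C^{k+2}}$, which is enough for $\psi^{-2}[R_\chi-R(g_\chi)]$ to be small in $\Hpsi{k}$.
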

\begin{proof} Apply \cite{CD2} theorem 5.9, with $K=Y=J=0$, $N=u$ and $\delta\rho=R_\chi-R(g_\chi)$. In fact we can solve
$$\pi_{\maclKzo }\psi^{-2}[R(g_\chi+h)-R(g_\chi)]=\pi_{\maclKzo }\psi^{-2}[R_\chi-R(g_\chi)]$$
for $g_\chi$ close to $g$ by a uniform inverse function theorem
using Proposition \ref{iso}. Note  that $R_\chi-R(g_\chi)$ vanishes
near the boundary and tends to zero together with $g_\chi-g$ when
$\overline g$ approaches $g$.
\end{proof}
\begin{remark}
The cutoff function $\chi$ used to interpolate the scalar
curvatures can be chosen to be different from the one
interpolating the metrics.
\end{remark}
From Proposition 5.10 and
Corollary 5.11 of \cite{CD2}
 we have more regularity.

\begin{proposition}
Under the conditions of Proposition \ref{mainprop}, assume
moreover that $k\geq [\frac n 2]+1$, and $\overline g$ is close
to $g$ in $C^{k+4,\alpha}(\overline \Omega)$. Then the solution
$h$ of proposition \ref{mainprop} is in $\phi^2
\psi^2C^{k+2,\alpha}_{\phi,\varphi}(\Omega)$. Moreover, if
$\overline g\in C^{\infty}$, then $h\in
\phi^2\psi^2C^{\infty}_{\phi,\varphi}(\Omega)\subset
C^{\infty}(\overline\Omega)$ and can be smoothly extended by
zero across the boundary.
\end{proposition}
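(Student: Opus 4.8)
The plan is to rewrite the defining equation \eq{solmodker} as a fourth order elliptic equation for $u$ with a controlled right hand side, and then to invoke the weighted elliptic theory of \cite{CD2} (Proposition 5.10 and Corollary 5.11) in order to bootstrap the regularity of $u$ and transfer it to $h=\psi^2\phi^{4}P_{g_\chi}^*u$, at the cost of two derivatives and a change of weight. First I would write $R(g_\chi+h)=R(g_\chi)+P_{g_\chi}h+Q_{g_\chi}(h)$, where $Q_{g_\chi}(h)$ collects the terms which are at least quadratic in $h$ and in its derivatives up to order two, with coefficients depending smoothly on $g_\chi$. Since $h=\psi^2\phi^{4}P_{g_\chi}^*u$ one has $\psi^{-2}P_{g_\chi}h={\mathcal L}_{\phi,\psi}(g_\chi)u$, so \eq{solmodker} is equivalent to
\[
{\mathcal L}_{\phi,\psi}(g_\chi)u=F+\kappa,\qquad \kappa\in\mathcal K,\qquad F:=\psi^{-2}\big[R_\chi-R(g_\chi)-Q_{g_\chi}(h)\big].
\]
Here ${\mathcal L}_{\phi,\psi}(g_\chi)=\psi^{-2}P_{g_\chi}\psi^2\phi^{4}P_{g_\chi}^*$ is elliptic in $\Omega$ and degenerates at $\partial\Omega$ precisely as dictated by the choice $\phi=x^2$, $\psi=x^{2(a-n/2)}e^{-s/x}$, $\varphi=x^{2a}e^{-s/x}$, while $\kappa$, belonging to the finite dimensional kernel $\mathcal K$ of the overdetermined elliptic operator $P_g^*$, is smooth and hence irrelevant for the regularity of $u$.

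Next I would run the bootstrap. The term $R_\chi-R(g_\chi)$ is $C^{k+2,\alpha}$ on $\overline\Omega$ and vanishes near $\partial\Omega$ (where $g_\chi$ coincides with $g$, respectively with $\overline g$), so it lies in $C^{k,\alpha}_{\phi,\varphi}(\Omega)$. For the quadratic term one starts from $h\in\Hpsi{k+4}(g_\chi)$, already supplied by Proposition \ref{mainprop}, and uses that the hypothesis $k\ge[\frac n2]+1$ makes the weighted Sobolev embeddings and the multiplicative properties of the weighted spaces available; then $\psi^{-2}Q_{g_\chi}(h)$ lies in a weighted Hölder space whose order of differentiability, after finitely many such steps, reaches $k$. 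Plugging this into the weighted Schauder estimate of \cite{CD2} (Proposition 5.10) for $\pi_{\maclKzo}{\mathcal L}_{\phi,\psi}(g_\chi)$, which is an isomorphism with uniformly bounded inverse by Proposition \ref{iso}, yields $u\in C^{k+4,\alpha}_{\phi,\varphi}(\Omega)$.

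Since $P_{g_\chi}^*$ is of second order, inspection of the weighted Hölder norms shows that $P_{g_\chi}^*u\in\phi^{-2}C^{k+2,\alpha}_{\phi,\varphi}(\Omega)$, hence $h=\psi^2\phi^{4}P_{g_\chi}^*u\in\phi^2\psi^2C^{k+2,\alpha}_{\phi,\varphi}(\Omega)$, which is the first assertion. If $\overline g\in C^\infty$, then $R_\chi-R(g_\chi)\in C^\infty$ and the same argument, carried out for every $k$, gives $u\in C^\infty_{\phi,\varphi}(\Omega)$ and therefore $h\in\phi^2\psi^2C^\infty_{\phi,\varphi}(\Omega)$.

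It remains to establish the inclusion $\phi^2\psi^2C^\infty_{\phi,\varphi}(\Omega)\subset C^\infty(\overline\Omega)$ with vanishing of all derivatives at $\partial\Omega$, which is Corollary 5.11 of \cite{CD2}: if $v\in C^\infty_{\phi,\varphi}$ then $|\nabla^{(i)}v|_g\le C_i\varphi^{-1}\phi^{-i}$, so a direct computation with the explicit weights bounds $|\nabla^{(i)}(\phi^2\psi^2 v)|_g$ by an expression of the form $C\,x^{-N}e^{-s/x}$, which, together with all its derivatives, tends to $0$ as $x\to0^{+}$ faster than any power of $x$; hence $h$, extended by $0$ on $\Omega_3\setminus\overline\Omega$, extends smoothly across $\partial\Omega$. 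The one genuinely delicate point is the bootstrap: one must make sure that the quadratic nonlinearity $\psi^{-2}Q_{g_\chi}(h)$ keeps the weight behaviour needed to re-enter the isomorphism of Proposition \ref{iso}. This is precisely what the weighted estimates of \cite{CD2} are built to handle, so here it comes down to matching weight exponents, everything else being interior elliptic regularity rescaled near the boundary.
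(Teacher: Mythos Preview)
Your proposal is correct and follows the same route as the paper: the paper itself gives no proof and simply invokes Proposition 5.10 and Corollary 5.11 of \cite{CD2}, which is exactly the weighted Schauder bootstrap and boundary-decay argument you outline. One small slip: in your bootstrap paragraph you write ``one starts from $h\in\Hpsi{k+4}(g_\chi)$'', but Proposition \ref{mainprop} gives $u\in\Hpsi{k+4}(g_\chi)$, not $h$; the regularity of $h=\psi^2\phi^4P_{g_\chi}^*u$ is then two orders lower and carries the extra $\psi^2\phi^4$ weight, which is in fact what you use later.
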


This proposition then concludes the proof of Theorem
\ref{maintheorem} which assumes that the metric
 $g$ is non degenerate. We will see in the next sections that the kernel projection can  be removed
 in some circumstances when a kernel is present.

 \subsection{Remark on a regularity improvement}
In the Corvino-Schoen construction, there is a loss of
regularity in the following way. Assume $g$ has regularity of
order $k+4$ and $\overline g$ has regularity $k+2$, the final
glued metrics can only have a regularity of order $k$ because
of the Ricci term in $P^*_{g_\chi}$. In \cite{CD3} we
regularize $ g_\chi$ in order to recover a better
regularity. There is a simpler way that can be used for many
applications such as the ones that follow. We define
$\hat{\mathcal L}(g_\chi)$ as $\mathcal L(g_\chi)$
but by replacing just the $P^*_{g_\chi}$ term by $P^*_{ g}$ (or by $\Hess_{
g_\chi}+\Delta_{ g_\chi}u\; g_\chi-u\Ric(g)$). Clearly, this operator has the same
isomorphism properties as $\mathcal L(g)$ for $\overline g$
close to $g$ with $k+2$ derivatives, with also a uniform
inverse bound. Using this, we then improve the regularity of
the final glued metric. We do not use this for the
applications, in order to avoid unnecessary complications
of the comprehension.

\section{Exactly Schwarzschild end}\label{sec:AE}
We show that the gluing construction of Corvino \cite{Corvino}
for scalar flat AE metrics, can be done without the scalar flat
assumption, by interpolating the scalar curvature of $g$ and
zero. This just needs  a fast decay of the scalar curvature
(the natural decay for $R(g)\in L^1$).

An asymptotically Euclidean  manifold $(M,g)$ of order
$\alpha>0$ is for us a Riemannian manifold with (at least) an
end $E\approx[A,+\infty[\times \mbbS^{n-1}$, $A>0$ such that
there exists a chart of this end on which the components of $g$
satisfy, for multi-indices $\beta$,
$$
\partial^\beta(g_{ij}-\delta_{ij})=O(r^{-\alpha-|\beta|}),
$$
near infinity, where $r=|x|$.

In order to guarantee  existence of a center of mass, we impose
an  asymptotic parity condition. That is, we assume there
exist an AE chart in which
\bel{RTC} |g^-_{ij}|+r|\partial_k(g^-_{ij})|\leq
C(1+r)^{-\alpha_-},\;\;\; \alpha_->\alpha,
\;\;\;\alpha+\alpha_-> (n-1), \ee and \bel{RTCR} |R(g)^-|\leq
C(1+r)^{-\beta_-},\;\;\; \beta_->\beta, \;\;\;\beta_-> n+1, \ee
where $f^-(x)= \frac12 [f(x)-f(-x)]$.
Note that this condition may probably be relaxed \cite{ChrCorvIsen2}.\\

We can now give the proof of Theorem \ref{ThAE}.

\begin{proof}
Let $\phi_\lambda:A_{1,4}\longmapsto A_{\lambda,4\lambda}$ be
defined by
$$
\phi_\lambda(x)=\lambda x.
$$
For any $\alpha$-asymptotically Euclidean  metric $g$ we define
on $A_{1,4}$
$$
g_\lambda=\lambda^{-2}\phi^*_\lambda g.
$$
In particular one has $g_\lambda=\delta+O(\lambda^{-\alpha})$ on
$A_{1,4}$. Now let $g$ as in Theorem \ref{ThAE}. Let $\chi$ be a
smooth non negative function equal to $1$ on $A_{1,2}$ and $0$ on
$A_{3,4}$ Let
$$
\overline{g}_{\lambda,S}=\chi g_\lambda+(1-\chi)g_{\lambda,S}.
$$
The equation we will solve is
\bel{equascal}
R(\overline{g}_{\lambda,S}+h)=\chi R({g}_{\lambda}),
\ee
where $h$ and its derivatives vanish on $\partial A_{1,4}$.
Let
$$
\mathcal K=\mbox{span}\{1,x^1,...x^n\},
$$
the kernel of $P^*_\delta$.
We can use the preceding procedure to solve
$$
\pi_{\mathcal K^\bot}\psi^{-2}[R(\overline{g}_{\lambda,S}+h)-R(\overline{g}_{\lambda,S})]=\pi_{\mathcal K^\bot}\psi^{-2}[\chi R({g}_{\lambda})-R(\overline{g}_{\lambda,S})],
$$
for any $\lambda$ large enough. Note that the correction is of order the error, that is
$h=O(\lambda^{-\gamma})$, $\gamma=\min(\alpha,n-2,\beta-2)=\min(\alpha,n-2)$.\\

We will now prove that we can choose $S=(m,c)$ to kill the kernel
projection. Let $\widetilde
g_{\lambda,S}=\overline{g}_{\lambda,S}+h=\delta+O(\lambda^{-\gamma})$.
The components of the projections onto the kernel are
\begin{eqnarray*}
q^0_\lambda(S):=<1, \psi^{-2}[R(\widetilde{g}_{\lambda,S})-\chi R({g}_{\lambda})]>_{L^2_\psi}&=&\int_{A_{1,4}}[R(\widetilde g_{\lambda,S})-\chi R({g}_{\lambda})]\\
&=&\int_{A_{1,4}}R(\widetilde g_{\lambda,S})+O(\lambda^{2-\beta}),
\end{eqnarray*}
\begin{eqnarray*}
q^l_\lambda(S):=<x^l, \psi^{-2}[R(\widetilde{g}_{\lambda,S})-\chi R({g}_{\lambda})]>_{L^2_\psi}&=&\int_{A_{1,4}}x^l[R(\widetilde{g}_{\lambda,S})-\chi R({g}_{\lambda})]\\
&=&\int_{A_{1,4}}x^lR(\widetilde{g}_{\lambda,S})+O(\lambda^{2-\beta}).
\end{eqnarray*}
We can deduce (see Appendix \ref{appendice} for details, also recall
that $\beta>n$)
\begin{eqnarray*}
q^0_\lambda(S)&=&4\omega_{n-1}\lambda^{2-n}[(m-m_g)+o(1)]+O(\lambda^{2-\beta})\\
&=&4\omega_{n-1}\lambda^{2-n}[(m-m_g)+o(1)],
\end{eqnarray*}
\begin{eqnarray*}
q^l_\lambda(S)&=&4\omega_{n-1}\lambda^{2-n}[C\;^l_{g_S}-C\;^l_g+o(1)]+O(\lambda^{2-\beta})\\
&=&4\omega_{n-1}\lambda^{2-n}[C\;^l_{g_S}-C\;^l_g+o(1)],
\end{eqnarray*}
where the $o(1)$'s are uniform relatively to $(m,C)$ when $(m,C)$
stay in a compact set where $m\neq 0$, say a small closed ball
around $(m_g,C_g)$. From an application to the Brouwer fixed point
theorem (see eg. Lemma 3.18 in \cite{CD2} for details), for any
$\lambda$ large enough, there exist $(m,C)$ (so an $S=(m,c)$) such
that $q_\lambda(S)=0$
\end{proof}
\section{Exactly Delaunay end}\label{sec:AD}
We show that the gluing of asymptotically Delaunay metrics of \cite{ChrPol} \cite{ChrPacPol}
can be extended to non constant scalar curvature metrics.\\

Let $N$ be a smooth compact $(n-1)$ dimensional manifold without boundary.
A (generalized) Delaunay metrics on $\R\times N$ is of the
form
$$
\mathring g=u^{\frac 4{n-2}}(dy^2+\mathring h),
$$
where $\mathring h$ is the Einstein metric on $N$ with scalar
curvature $(n-1)(n-2)$, and $u=u(y)>0$ is a periodic solution of
$$
u''-\frac{(n-2)^2}4u+\frac{n(n-2)}4u^{\frac{n+2}{n-2}}=0.
$$
The metric depends on two parameters, the period of $u$ and the neck
size $\epsilon$ which is the minimum of $u$ (we do not allow the
critical cases of the sphere  and the cylinder ):
$$
0<\epsilon<\left(\frac{n-2}2\right)^{\frac{n-2}4}.
$$
Let $(M,g)$ be a smooth $n$ dimensional Riemannian manifold. We
will say that $M$ is asymptotically Delaunay, if there exists
an end $E\approx[0,+\infty)\times N$, on which  $g$ is
asymptotic to a Delaunay metric $\mathring g=\mathring
g_\epsilon$ together with derivatives up to order four.

On the end $E$ the Delaunay metric can be written of the form
$$
\mathring g_\epsilon=dx^2+e^{2f(x)}\mathring h,
$$
where $f$ has a period $T=T(\epsilon)$. Let $\chi$ be a smooth cut
off function on $[0,T]$ equal to $1$ near zero and zero near $T$,
and let $\chi_i$ be its translated on $[iT+\sigma,(i+1)T+\sigma]$
for a fixed $\sigma$ to be made precise later. Let
$\Omega_i=[iT+\sigma,(i+1)T+\sigma]\times N$.

We then have (compare \cite{ChrPol}, Theorem 3.1)
\begin{theorem}\label{thAD}
Assume that $(M,g)$ is asymptotic to a Delaunay metric $\mathring
g_\epsilon$ on an end $E$. Then for  $i$ large enough, there exist
$\epsilon'$ and a metric $g_i$ on $E$ which coincide with $g$ before
$\Omega_i$ and with a
 Delaunay $\mathring g_{\epsilon'}$ after $\Omega_i$ and such that $$R(g_i)=\chi_i R(g)+(1-\chi_i)n(n-1).$$
\end{theorem}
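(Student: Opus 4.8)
The plan is to transplant the scheme of Section~\ref{sec:fixe} and of Theorem~\ref{ThAE} to one period of the Delaunay end, using the two Delaunay parameters to absorb the kernel of $P^*$ at the model. Since $\mathring g_\epsilon=dx^2+e^{2f(x)}\mathring h$ is $T$--periodic, translation by $-iT$ identifies $\Omega_i$ with the fixed compact domain $\Omega_0=[\sigma,T+\sigma]\times N$ with smooth boundary; writing $g^{(i)}$ for the pullback of $g$ to $\Omega_0$, the asymptotic--Delaunay hypothesis gives $g^{(i)}\to\mathring g_\epsilon$ and $R(g^{(i)})\to n(n-1)$ on $\overline{\Omega_0}$ as $i\to\infty$. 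For a new neck size $\epsilon'$ near $\epsilon$ and a translation $\sigma$ placing the inserted piece (the parameter ``to be made precise'', chosen so that $\mathring g_{\epsilon',\sigma}$ matches $g^{(i)}$ to leading order near $x=(i+1)T+\sigma$), set $\overline{g}^{(i)}:=\chi_0 g^{(i)}+(1-\chi_0)\mathring g_{\epsilon',\sigma}$ and $R^{(i)}_\chi:=\chi_0 R(g^{(i)})+(1-\chi_0)n(n-1)$, with $\chi_0$ the corresponding cutoff on $\Omega_0$; for $i$ large and $(\epsilon',\sigma)$ in a fixed small closed ball $B$ about the model's parameters, $\overline{g}^{(i)}$ is as close to $\mathring g_\epsilon$ on $\overline{\Omega_0}$ as we wish and $R^{(i)}_\chi-R(\overline{g}^{(i)})\to0$. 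Letting $\mathcal K$ be the kernel of $P^*_{\mathring g_\epsilon}$ on $\Omega_0$ and using the weighted spaces of Section~\ref{SwSs} attached to a defining function of $\partial\Omega_0$, Propositions~\ref{iso} and~\ref{mainprop} then yield a unique small $h=\psi^2\phi^4 P^*_{\overline{g}^{(i)}}u$, $u\in\Hpsi{k+4}(\overline{g}^{(i)})$, with $\pi_{\maclKzo}\psi^{-2}[R(\overline{g}^{(i)}+h)-R^{(i)}_\chi]=0$, which by the regularity Proposition of Section~\ref{sec:fixe} is $C^\infty$ and vanishes to infinite order on $\partial\Omega_0$, with $h\to0$ uniformly in $(\epsilon',\sigma)\in B$ as $i\to\infty$. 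Put $\widetilde{g}^{(i)}=\overline{g}^{(i)}+h$; it equals $g^{(i)}$ near $x=\sigma$ and $\mathring g_{\epsilon',\sigma}$ near $x=T+\sigma$, so it extends smoothly across $\partial\Omega_0$ by these.

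It remains to kill the kernel projection. By the analysis of the linearised Delaunay operator (compare \cite{ChrPol},\cite{ChrPacPol}), $\mathcal K$ is two--dimensional, which is exactly matched by the two free parameters $(\epsilon',\sigma)$. Fixing a basis $\{w_0,w_1\}$ of $\mathcal K$, consider
$$
Q_i(\epsilon',\sigma)=\bigl(\langle w_0,\psi^{-2}[R(\widetilde{g}^{(i)})-R^{(i)}_\chi]\rangle_{L^2_\psi},\ \langle w_1,\psi^{-2}[R(\widetilde{g}^{(i)})-R^{(i)}_\chi]\rangle_{L^2_\psi}\bigr),
$$
so that $Q_i=0$ is equivalent to $\pi_{\mathcal K}\psi^{-2}[R(\widetilde{g}^{(i)})-R^{(i)}_\chi]=0$, hence, with the step above, to $R(\widetilde{g}^{(i)})=R^{(i)}_\chi$. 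Now $\langle w_a,\psi^{-2}(\cdot)\rangle_{L^2_\psi}=\int_{\Omega_0}w_a(\cdot)\,d\mu$, the difference $R(g^{(i)})-n(n-1)$ decays along the end faster than the Delaunay ``mass'' scale so that $\chi_0[R(g^{(i)})-n(n-1)]$ is negligible, and $P^*_{\mathring g_\epsilon}w_a=0$; using the equation for $h$ and integrating by parts, $Q_i$ is expressed to leading order through boundary integrals over $\partial\Omega_0$, i.e.\ the Pohozaev/Hamiltonian--type invariants of the two Delaunay ends, and, exactly as in the computation of Appendix~\ref{appendice},
$$
Q_i(\epsilon',\sigma)=c_i\bigl[\Phi(\epsilon',\sigma)-\Phi_g+o(1)\bigr],
$$
with $c_i\neq0$, $\Phi$ the pair of Delaunay invariants of $\mathring g_{\epsilon',\sigma}$, $\Phi_g$ the corresponding asymptotic invariants of $g$, and $o(1)\to0$ uniformly on $B$. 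Since $\Phi$ is a local diffeomorphism on $B$ --- the Delaunay analogue of the hypothesis $m_g\neq0$ in Theorem~\ref{ThAE}, which here holds because $\epsilon$ is a genuine Delaunay parameter, $0<\epsilon<(\tfrac{n-2}{2})^{(n-2)/4}$ (excluding the sphere and the cylinder) --- Brouwer's theorem (Lemma~3.18 of \cite{CD2}) provides, for every $i$ large, a pair $(\epsilon',\sigma)$ with $Q_i(\epsilon',\sigma)=0$.

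Translating back by $+iT$ and gluing $\widetilde{g}^{(i)}$ into $E$ then gives the required smooth metric $g_i$, equal to $g$ before $\Omega_i$ and to the Delaunay metric $\mathring g_{\epsilon'}$ after $\Omega_i$, with $R(g_i)=\chi_i R(g)+(1-\chi_i)n(n-1)$. The main obstacle is the second step: identifying $\mathcal K$ with the two--dimensional space of bounded Delaunay Jacobi fields and, above all, establishing the expansion of $Q_i$ with an \emph{invertible} leading term --- the Delaunay counterpart of the ADM mass and centre--of--mass expansion underlying Theorem~\ref{ThAE}, which rests on Pohozaev--type balancing identities for the Delaunay ODE. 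Crucially, replacing the constant $n(n-1)$ by $\chi_i R(g)+(1-\chi_i)n(n-1)$ does not change this leading order, since the discrepancy decays along the end; everything else is routine given Propositions~\ref{iso} and~\ref{mainprop} and the regularity statement of Section~\ref{sec:fixe}, as for Theorems~\ref{maintheorem} and~\ref{ThAE}.
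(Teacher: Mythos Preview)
Your outline mirrors the paper's strategy, but it rests on an incorrect identification of the obstruction space: on the period domain $\Omega_i$ the kernel $\mathcal K$ of $P^*_{\mathring g_\epsilon}$ is \emph{one}--dimensional, spanned by the function $\mathring N$ of \cite{ChrPol}, not two--dimensional. Consequently the paper varies only the neck size $\epsilon'$; the phase $\sigma$ is \emph{fixed} once and for all, its role being merely to guarantee that the leading coefficient
\[
\lambda:=4\omega_{n-1}(n-1)\,\mathring N_{|_{x=\sigma}}
\]
in the expansion $q_i(\epsilon')=\lambda(m'-\mathring m)+O((\epsilon'-\epsilon)^2)+o_i(1)$ is nonzero. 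The conclusion then follows from the intermediate value theorem rather than Brouwer. Your claim that ``$\mathcal K$ is two--dimensional \dots\ exactly matched by the two free parameters $(\epsilon',\sigma)$'' conflates the Jacobi fields of the Delaunay ODE with the kernel of the second--order operator $P^*$ on a period; these are different objects.

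Apart from this, the mechanics of your argument are in line with the paper: pull back to a fixed domain, interpolate, solve modulo $\mathcal K$ via Propositions~\ref{iso}--\ref{mainprop} with errors $h_i=O(|\epsilon'-\epsilon|)+o_i(1)$, then rewrite the kernel projection as a boundary flux and kill it by adjusting the Delaunay parameter. If you correct the dimension of $\mathcal K$, drop $\sigma$ as a variable (keeping it only as a choice making $\lambda\neq0$), and replace the Brouwer step by a one--variable sign argument, your sketch becomes the paper's proof.
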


\begin{proof}
We just mention the changes needed in the proof of Theorem 3.1 in
\cite{ChrPol}. We define $g_{\epsilon'}=\chi_i g+(1-\chi_i)\mathring
g_{\epsilon'}$ and $R_{\epsilon'}=\chi_i R(g)+(1-\chi_i)n(n-1)$. We
can then solve on $\Omega_i$, for $i$ large enough the equation
$$
\pi_{\mathcal K^\bot}\psi_i^{-2}[R(g_{\epsilon'}+h_i)-R(g_{\epsilon'})]=\pi_{\mathcal K^\bot}\psi_i^{-2}[ R_{\epsilon'}-R(g_{\epsilon'})],
$$
where $\psi_i$ is the translation on $\Omega_i$ of $\psi$
defined on $[0,T]$ as before. Here $\mathcal K$ is one
dimensional and spanned by the $\mathring N$ of  \cite{ChrPol}.
We also call the resulting metric $\widetilde
g_{\epsilon'}:=g_{\epsilon'}+h_i$. Here also the correction is
of order the perturbation introduced
$h_i=O(|\epsilon'-\epsilon|)+o_i(1)$, where $o_i(1)$ tends to
zero uniformly relatively to $\epsilon'$ bounded when $i$ tends
to infinity. The projection onto the kernel becomes (see
equation (3.11) in \cite{ChrPol})\footnote{The measure used in
\cite{ChrPol} equation (3.11) appears to be the wrong one when
using the theorem of \cite{CD2} as described there, this does not affect
their proof, just a small change in equation (3.19)
there is needed.}
$$
q_i(\epsilon'):=\int_{\Omega_i}\mathring N[R(\widetilde g_{\epsilon'})-R_{\epsilon'}]d\mu_{g_{\epsilon'}}.
$$
This projection can be rewritten as
\begin{eqnarray*}
q_i(\epsilon')&=&\int_{\Omega_i}\mathring N[R(\widetilde g_{\epsilon'})-n(n-1)]d\mu_{g_{\epsilon'}}
+\int_{\Omega_i}\mathring N[n(n-1)-R_{\epsilon'}]d\mu_{g_{\epsilon'}}\\
&=&\int_{\Omega_i}\mathring N[R(\widetilde g_{\epsilon'})-n(n-1)]d\mu_{ g_{\epsilon'}}+o_i(1)\\
&=&\int_{\Omega_i}\mathring N[R(\widetilde g_{\epsilon'})-n(n-1)]d\mu_{ \widetilde g_{\epsilon'}}+O((|\epsilon'-\epsilon|+o_i(1))^2)+o_i(1).
\end{eqnarray*}
We can then proceed as in Equation (3.19) in \cite{ChrPol} (with the
missprints of the boundary and the measure corrected):
\begin{eqnarray*}
q_i(\epsilon')&=&\int_{\{(i+1)T+\sigma\}\times N}\U^idS_i-\int_{\{iT+\sigma\}\times N}\U^idS_i+O((|\epsilon'-\epsilon|+o_i(1))^2)+o_i(1)\\
&=&\lambda(m'-\mathring m)+O((\epsilon'-\epsilon)^2)+O(|\epsilon'-\epsilon|) o_i(1)+ o_i(1),
\end{eqnarray*}
where we recall that $\sigma$ has been chosen such that
$$
\lambda:=4\omega_{n-1}(n-1)\mathring N_{|_{x=\sigma}}\neq0.
$$
By the intermediate value theorem, for any $i$ large enough, we can choose $m'$ (equivalently $\epsilon'$)
such that $q_i(\epsilon')=0$.

\end{proof}

\section{Appendix: Scalar curvature, mass and center of mass}\label{appendice}
Here we recall the basic definitions and relations between the objects of the title in the AE setting. \\

Let $g$ be an asymptotically euclidian metric of order
$\alpha>n/2-1$. Assume also that the asymptotic parity condition
\eq{RTC} and \eq{RTCR} holds. In this section, we will lower or raise
indices with the Euclidean  metric $\delta$ and its inverse.
 Let us define
$$
V_{ij}:=g_{ij}-(\Tr_\delta g)\;\delta_{ij}.
$$
The mass of $g$ is defined  by
$$
m_g:=\lim_{\lambda\rightarrow\infty}m_g(\lambda)\;,\;\;\;
m_g(\lambda):=\frac1{4\omega_{n-1}}\int_{\mbbS_\lambda}(\partial^iV_{ij})\nu^jds,
$$
where $\nu^j=x^j/|x|$ is the unit normal,   $ds$
is the standard  measure  on the sphere   $\mbbS_\lambda$ of radius $\lambda$ and $\omega_{n-1}$ is the volume
of the unit sphere in $\R^n$. The center of mass is defined
  by
$$
C\;^l_g=(mc)_g^l:=\lim_{\lambda\rightarrow\infty}C\;^l_g(\lambda)\;,\;\;\;
C\;^l_g(\lambda):=\frac1{4\omega_{n-1}}\int_{\mbbS_\lambda}[x^l\partial^iV_{ij}-V_j^l]\nu^jds.
$$

The scalar curvature satisfies near infinity,
$$
R(g)\sqrt{|g|}=\partial^j\partial^iV_{ij}+Q,
$$
where $|g|$ is the determinant of $g$, $e=g-\delta$, and $Q$ is quadratic in  $\partial e=O(r^{-(\alpha+1)})$.
In particular we see that,
$$
\frac1{4\omega_{n-1}}\int_{A_{\lambda,\nu}}R(g)d\mu_g=m_g(\lambda)-m_g(\nu)+O(\lambda^{-2\alpha+n-2}),
$$
and when $R(g)=O(r^{-\beta})$, $\beta>n$, we have (for $\nu=+\infty$ ):
$$
m_g(\lambda)=m_g+O(\lambda^{-2\alpha+n-2})+O(\lambda^{-\beta+n})=m_g+o(1).
$$
In the same way, and using parity considerations (just write $e=e^++e^-$), we see that,
$$
\frac1{4\omega_{n-1}}\int_{A_{\lambda,\nu}}x^lR(g)d\mu_g=C\;^l_g(\lambda)-C\;^l_g(\nu)+O(\lambda^{n-1-\alpha-\alpha_-}).
$$
When $R(g)^-=O(r^{-\beta_-})$, $\beta_->n+1$,
we have (for $\nu=+\infty$) :
$$
C\;^l_g(\lambda)=C\;^l_g+O(\lambda^{n-1-\alpha-\alpha_-})+O(\lambda^{n+1-\beta_-})=C\;^l_g+o(1).
$$
\\

As the computations needed are made on a rescaled annulus we
give the simple relations between the scalar curvatures and the
measures. Let $g$ be  $\alpha$-AE and $g_\lambda$  as in
Section \ref{sec:AE}, for $x\in A_{1,4}$
  let $y=\phi_\lambda(x)$ so
$$
R(g_\lambda)(x)=\lambda^2R(g)(y),
$$
and
$$
\sqrt{|g_\lambda|}(x)dx=\lambda^{-n}\sqrt{|g|}(y)dy=(1+O(\lambda^{-\alpha}))\lambda^{-n}dy.
$$
Thus one obtains \bel{Rdmu}
R(g_\lambda)(x)\sqrt{|g_\lambda|}(x)dx=\lambda^{2-n}R(g)(y)\sqrt{|g|}(y)dy.
\ee

\bigskip

\noindent{\sc Acknowledgements} :
I am grateful to P. T. Chru\'sciel and F. Gautero for their comments on the original manuscript.

\bibliographystyle{amsplain}

\bibliography{../references/newbiblio,%
../references/reffile,%
../references/bibl,%
../references/hip_bib,%
../references/newbib,%
../references/erwbiblio,%
../references/PDE,%
../references/netbiblio,%
stationary,collescal2}

\def\cprime{$'$}
\providecommand{\bysame}{\leavevmode\hbox to3em{\hrulefill}\thinspace}
\providecommand{\MR}{\relax\ifhmode\unskip\space\fi MR }
\providecommand{\MRhref}[2]{%
  \href{http://www.ams.org/mathscinet-getitem?mr=#1}{#2}
}
\providecommand{\href}[2]{#2}
\begin{thebibliography}{10}

\bibitem{BCS}
R.~Beig, P.~T. Chru\'sciel, and R.~Schoen, \emph{Kids are non-generic}, Ann.
  Henri Poincar\'e (2005), no.~6, 155--194.

\bibitem{CD5}
A.~Chru\'sciel and E.~Delay, \emph{Gluing constructions for asymptotically
  hyperbolic manifolds with constant scalar curvature}, Comm. Anal. Geom.
  \textbf{17} (2009), no.~2, 343--381.

\bibitem{ChrCorvIsen}
P.~T. Chru\'sciel, J.~Corvino, and J.~Isenberg, \emph{Construction of the
  {N}-body time symmetric initial data sets in general relativity},  (2009),
  arXiv:0909.1101v1 $[$grqc$]$.

\bibitem{ChrCorvIsen2}
\bysame, \emph{Construction of the {N}-body initial data sets in general
  relativity},  (2010), in preparation.

\bibitem{CD1}
P.~T. Chru\'sciel and E.~Delay, \emph{Existence of non-trivial, vaccum,
  asymptotically simple space-times}, Class. Quantum Grav. (2002), 71--79.

\bibitem{CD2}
\bysame, \emph{On mapping properties of the general relativistic constraints
  operator in weighted function spaces, with applications}, M\'emoires de la
  S.M.F. (2003), 103p.

\bibitem{CD3}
\bysame, \emph{Manifold structures for the set of solutions of the general
  relativistic constraints equations}, Journal of Geometry and Physics
  \textbf{51} (2004), 442--472, gr-qc/0309001.

\bibitem{ChrPacPol}
P.~T. Chru\'sciel, F.~Pacard, and D.~Pollack, \emph{Singular {Y}amabe metrics
  and initial data with exactly {K}ottler-{S}chwarzschild-d{}e {S}itter ends
  {II}}, Math. Res. Lett. \textbf{16} (2009), no.~1, 157--164.

\bibitem{ChrPol}
P.~T. Chru\'sciel and D.~Pollack, \emph{Singular {Y}amabe metrics and initial
  data with exactly {K}ottler-{S}chwarzschild-d{}e {S}itter ends}, Annales
  Henri Poincar\'e \textbf{9} (2008), no.~4, 639--654.

\bibitem{Corvino}
J.~Corvino, \emph{Scalar curvature deformation and a gluing construction for
  the {E}instein constraint equations}, Commun.\ Math.\ Phys. \textbf{214}
  (2000), 137--189.

\bibitem{CorvSchoen}
J.~Corvino and R.~Schoen, \emph{On the asymptotics for the vacuum {E}instein
  constraint equations}, J. Differential Geom. \textbf{73} (2006), no.~2,
  185--217.

\bibitem{Delay:ColleP}
E.~Delay, \emph{Smooth gluing in the kernel of underdetermined elliptic
  operators, with applications},  (2010), arXiv:1003.0535.

\end{thebibliography}

\end{document}